\documentclass[reqno,11pt]{amsart}
\usepackage{tkz-euclide}
\usepackage{hyperref}
\usepackage{amsmath} 
\usepackage{amsfonts}
\usepackage{mathtools}
\usepackage{amssymb} 
\usepackage{mathrsfs} 
\usepackage{graphicx}  
\usepackage{cancel}
\usepackage{bm}
\usepackage{tikz} 
\usepackage{centernot}
\usepackage{soul}
\usetikzlibrary{svg.path} 
\usepackage{longtable}
\usepackage{makecell}

\usepackage{geometry}
\usetikzlibrary{calc} 
\usepackage{xcolor}  
\usetikzlibrary{patterns}
\usetikzlibrary{arrows.meta} 
\usepackage{amsthm} 
\usepackage{float}
\usepackage{enumitem}
\setenumerate{ ref={\normalfont(\roman*)},label={\normalfont(\roman*)},itemsep=3pt}
\usepackage[english]{babel}
\usepackage[font=footnotesize, labelfont=bf]{ caption}
\usepackage{ esint }
\usepackage{stackengine,scalerel,graphicx}
\stackMath

\definecolor{trueblue}{rgb}{0.0, 0.45, 0.81}
\definecolor{truegreen}{rgb}{0.13, 0.55, 0.13}

\theoremstyle{plain}
\begingroup
\newtheorem{theorem}{Theorem}[section]
\newtheorem{lemma}[theorem]{Lemma}

\newtheorem{proposition}[theorem]{Proposition}

\endgroup

\theoremstyle{definition}
\begingroup

\endgroup

\newcommand{\B}{\mb B^3}
\renewcommand{\tilde}{\widetilde}

\DeclareMathOperator{\dv}{div}

\numberwithin{equation}{section}
\newcommand{\N}{\mathbb{N}}

\newcommand{\R}{\mathbb{R}}
\newcommand{\C}{\mathbb{C}}

\renewcommand{\S}{{\mathbb{S}}}

\usepackage[normalem]{ulem}

\def \mb{\mathbb}

\begin{document}
	
\title
[
Some lifting and approximation properties for maps in $W^{1,2}(\B;\S^2)$]
{
Some lifting and approximation properties\\ for maps in $W^{1,2}(\B;\S^2)$}
	
\author[Andr\'e Guerra]{Andr\'e Guerra} 
\address[Andr\'e Guerra]{Department of Pure Mathematics and Mathematical Statistics, University of Cambridge, Wilberforce Rd, Cambridge CB3 0WB, UK}
\email{adblg2@cam.ac.uk}

\author[Xavier Lamy]{Xavier Lamy} 
\address[{Xavier Lamy}]{Institut de Mathématiques de Toulouse, Université de Toulouse, 118 route de Narbonne, F-31062 Toulouse Cedex 8, France}
\email{Xavier.Lamy@math.univ-toulouse.fr}	
	
\author{Konstantinos Zemas}
\address[Konstantinos Zemas]{Institute for Applied Mathematics, University of Bonn\\
Endenicher Allee 60, 53115 Bonn, Germany}
\email{zemas@iam.uni-bonn.de}
	

\begin{abstract}
Smooth maps $u\colon\mathbb B^3\to\mathbb S^2$ 
can be lifted to $\hat u\colon\mathbb B^3\to\mathbb S^3$ using the Hopf fibration $h\colon \mathbb S^3\to\mathbb S^2$ via the factorization $u=h\circ\hat u$.
In this note we characterize the $W^{1,2}$-maps which have this lifting property in terms of exactness of the pullback form $u^*\omega_{\mathbb S^2}$, 
and deduce a smooth approximation property preserving the constraint $u^*\omega_{\mathbb S^2}=d\eta$.
%
\end{abstract} 
	
\maketitle
\thispagestyle{empty}


\section{Introduction}
The Hopf fibration  $h\colon\S^3\to\S^2$ 
is a prominent example of a nontrivial $\S^1$-bundle of $\S^3$ over $\S^2$. 
It  maps a unit vector $(z,w)\in\S^3\subset\C^2$ to the associated complex line $[(z,w)]\in \C\mathbb P^1\approx \S^2$, where the last identification is by stereographic projection.

Any smooth map $u\colon\B\to\S^2$ admits a smooth  Hopf lift $\hat u\colon \B\to\S^3$,
that is, it can be factorized as  $u=h\circ\hat u$,
 see \textit{e.g.} \cite[\textsection{2}]{Hardt2003}.
In this note we answer two questions recently raised in \cite{riviere2026gauge} about the extension of this lifting property to weakly differentiable maps, 
and related approximation issues.

Let us start by remarking that, 
if $u\in W^{1,2}(\B;\S^2)$ admits a Hopf lift $\hat u\in W^{1,2}(\B;\S^3)$, 
then the $2$-form $u^*\omega_{\S^2}\in L^1(\B;\bigwedge^2(\R^3)^*)$ must be (weakly) exact.
The chain rule implies indeed
\[u^*\omega_{\S^2}=\hat u^*(h^*\omega_{\S^2})=2\hat u^*d\theta\,,\]
where  $\theta\in \Omega^1(\S^3)$ is such that $h^*\omega_{\S^2}=2d\theta$.
One may check that
 $\eta=2\hat u^*\theta\in L^2(\B;\bigwedge^1(\R^3)^*)$
 satisfies
\[d\eta=2\hat u^*d\theta \]
in the sense of distributions (\textit{e.g.}, by 
mollifying $\hat u$ and extending $\theta$ to $\R^4$), 
  and therefore one has $u^*\omega_{\S^2}=d\eta$.
Our main result states that the converse is also true, 
thereby positively answering \cite[Open Problem  8]{riviere2026gauge}.

\begin{theorem}\label{main_thm:lifting}
Let $u\in W^{1,2}(\B;\S^2)$ be such that there exists $\eta\in L^2(\B;\bigwedge^1(\R^3)^*)$ with 
\begin{equation}\label{eq:1_pullback_u_closed}
d\eta=u^\ast\omega_{\S^2} \quad \text{in the sense of distributions}\,.
\end{equation}
Then, there exists $\hat u\in W^{1,2}(\B;\S^3)$ such that 
\begin{equation}\label{eq:Hopf_lift}
u=h\circ \hat u\,,
\end{equation}
where $h\colon \S^3\to \S^2$ is the Hopf fibration.
\end{theorem}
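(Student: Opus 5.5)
The plan is to construct the lift locally as a gauge-fixed section of the pullback circle bundle. We then use exactness of $u^*\omega_{\S^2}$ to glue the local lifts into a global $W^{1,2}$ map.

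\textbf{Setting up the equation for the phase.} Write a candidate lift as $\hat u=e^{i\varphi}\sigma$, where $\sigma$ is some (possibly singular) lift of $u$. The Hopf lift condition then becomes a first-order equation for the phase $\varphi$. Concretely, a lift $\hat u$ with $h\circ\hat u=u$ is determined by $u$ up to the $\S^1$-action, and the connection form satisfies
\[
2\hat u^*\theta = \eta_0 + d\varphi
\]
for a reference form $\eta_0$. Since $d(2\hat u^*\theta)=u^*\omega_{\S^2}=d\eta$, the natural choice is to pick $\hat u$ so that $2\hat u^*\theta=\eta+d\psi$ for some function $\psi$. In other words, we look for a lift whose connection form is the prescribed $\eta$, possibly after adding an exact form.

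\textbf{Approximation step.} I would implement this by approximation, since $W^{1,2}$ maps into $\S^2$ are not smooth in general.

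\begin{enumerate}
\item By Bethuel's theorem, $u$ is the strong $W^{1,2}$-limit of maps $u_k$ that are smooth away from finitely many point singularities $a_{k,j}$. Each singularity carries a degree $d_{k,j}$.
\item Exactness of $u^*\omega_{\S^2}$ means that $d(u^*\omega_{\S^2})=0$ distributionally. For the $u_k$, this distribution is $\frac{4\pi}{3}\sum_j d_{k,j}\delta_{a_{k,j}}$, up to normalization. This tends to $0$ only in a weak sense, so the first key task is a refined approximation. Using exactness, I want to approximate $u$ by maps $u_k$ whose singularities cancel, i.e.\ with $d(u_k^*\omega)=0$. Equivalently, I want $u$ to lie in the strong closure of $C^\infty(\overline{\B};\S^2)$. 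This is the Bethuel--Coron--Demengel--Hélein characterization: $u$ is a strong limit of smooth maps if and only if $d(u^*\omega)=0$.
\item Take smooth $u_k\to u$ in $W^{1,2}$. Each $u_k$ has a smooth Hopf lift $\hat u_k$, unique up to the gauge $\hat u_k\mapsto e^{i\varphi}\hat u_k$.
\end{enumerate}

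\textbf{Gauge fixing and uniform bounds.} For the lifts we have $|\nabla\hat u_k|^2=\tfrac14|\nabla u_k|^2+|2\hat u_k^*\theta|^2$, up to constants. So we need to control the connection forms $\eta_k:=2\hat u_k^*\theta$. Choose $\varphi_k$ to put $\eta_k$ in Coulomb gauge, with $d^*\eta_k=0$ and $\eta_k\cdot\nu=0$ on $\partial\B$. Then Hodge theory gives
\[
\|\eta_k\|_{L^2}\lesssim \|u_k^*\omega\|_{W^{-1,2}},
\]
since $d\eta_k=u_k^*\omega$. The point is that $u^*\omega=d\eta$ with $\eta\in L^2$ means $u^*\omega\in W^{-1,2}$, with the Coulomb representative of norm at most $\|\eta\|_{L^2}$.

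\textbf{Main obstacle.} The difficulty is the convergence $u_k^*\omega\to u^*\omega$ in $W^{-1,2}$. Pullbacks only converge in $L^1$, and $L^1\not\hookrightarrow W^{-1,2}$ in three dimensions. My first attempt is to use the Jacobian structure: $u_k^*\omega-u^*\omega$ is a sum of terms $d(\,\cdot\,)$ with $L^2$ potentials, namely the difference of $u_k\times du_k$ wedge terms. I would check that these potentials converge in $L^2$ via div-curl and strong $W^{1,2}$ convergence. Failing that, I would use weak compactness instead: uniform bounds on $\eta_k$ suffice. To obtain them, I would construct the $u_k$ via a Bethuel-type dipole removal that retains control of the $W^{-1,2}$ norm.

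\textbf{Conclusion.} Once $\hat u_k$ is bounded in $W^{1,2}$, extract a weakly convergent subsequence $\hat u_k\rightharpoonup\hat u$. Strong $L^2$ convergence preserves both $|\hat u|=1$ and $h\circ\hat u=u$, and this gives \eqref{eq:Hopf_lift}.
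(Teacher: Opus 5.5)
Your overall scheme is the one used in the paper: smooth approximation via Bethuel's theorem, smooth Hopf lifts whose gauges $\eta_k=2\hat u_k^*\theta$ are in Coulomb form, the identity \eqref{eq:lift}, and weak compactness. But there is a genuine gap exactly at the step you flag. You need $\sup_k\|\eta_k\|_{L^2}<\infty$, and neither of your remedies provides it.

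\textbf{Why neither remedy works.} The Jacobian/div-curl idea fails at the outset. Since $\omega_{\S^2}$ is not exact on $\S^2$, the form $u^*\omega_{\S^2}$ has no universal primitive bilinear in $(u,du)$, unlike Jacobians of $\R^n$-valued maps. Worse, the bound you want is false for general smooth approximating sequences. Take $u$ constant, and let $u_k$ be constant outside a thin solid torus with core length $L_k$ and cross-sectional radius $\rho_k\ll L_k$, equal on each normal disk to a truncated degree-one bubble.
\begin{itemize}
\item Then $\|du_k\|_{L^2}^2\approx 8\pi L_k$ and $\|u_k-u\|_{L^2}\to0$.
\item The Hodge dual of $u_k^*\omega_{\S^2}$ carries flux $4\pi$ along the core. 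So the Coulomb gauge $\eta_k$ behaves like the magnetic field of a thin current loop, and $\|\eta_k\|_{L^2}^2\gtrsim L_k\log(L_k/\rho_k)$.
\item With, say, $L_k=1/k$ and $\rho_k=e^{-k^2}$, you get $u_k\to u$ in $W^{1,2}$ while $\|\eta_k\|_{L^2}\to\infty$.
\item By \eqref{eq:trace_def}, the Coulomb gauge is $L^2$-minimal among all gauges $\eta_k+d\psi$. Hence \emph{every} Hopf lift of $u_k$ has diverging Dirichlet energy.
\end{itemize}
Your fallback, a dipole-removal construction keeping $u_k^*\omega_{\S^2}$ bounded in $W^{-1,2}$, is only asserted. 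It is essentially the content of Theorem~\ref{main_thm:approximation}, which the paper deduces \emph{from} the lifting theorem.

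\textbf{The missing idea.} Settle for weaker uniform bounds and upgrade regularity only for the limit.
\begin{itemize}
\item Since $|u_k|=1$ and $u_k\to u$ in $W^{1,2}$, you have $u_k^*\omega_{\S^2}\to u^*\omega_{\S^2}$ in $L^1$.
\item $L^1$-elliptic estimates (Bourgain--Brezis, or standard ones in $L^p$ for $p<3/2$) then bound $\eta_k$ in $L^{3/2}$. By \eqref{eq:lift}, this bounds $\hat u_k$ in $W^{1,3/2}$.
\item Extract $\hat u_k\rightharpoonup\hat u$ and $\eta_k\rightharpoonup\tilde\eta$. Using products of strongly and weakly convergent factors, you get $h\circ\hat u=u$, $\tilde\eta=2\hat u^*\theta$, and $d\tilde\eta=u^*\omega_{\S^2}$, $d^*\tilde\eta=0$, $\tilde\eta_N=0$.
\item Uniqueness in Proposition~\ref{prop:particular_eta}, applied in $L^{3/2}$, identifies $\tilde\eta$ with the Coulomb representative of the given $\eta\in L^2$. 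So $\tilde\eta\in L^2$, and \eqref{eq:lift} yields $\hat u\in W^{1,2}(\B;\S^3)$.
\end{itemize}
Your own remark that the Coulomb representative of $u^*\omega_{\S^2}$ is controlled by $\|\eta\|_{L^2}$ is precisely the needed ingredient. It has to be applied to the limit, not to the approximants.
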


It would be interesting to also investigate under which conditions, for  $1\leq p<2$, a map $u\in W^{1,p}(\B;\S^2)$ admits a Hopf lift $\hat u\in W^{1,p}(\B;\S^3)$ as in \eqref{eq:Hopf_lift}. 
In the different context of Riemannian coverings $\tilde{\mathcal N}\to\mathcal N$, 
\textit{e.g.}, $\R\to\S^1$ or $\S^2\to\mathbb{RP}^2$,
the question of lifting $\mathcal N$-valued maps to $\tilde{\mathcal N}$-valued maps
 in Sobolev spaces has been quite thoroughly 
studied, see  \cite{VanSchaftingen2025} and references therein.
A notable difference with the present setting is that coverings have discrete fibers, while the Hopf fibers  are circles.

This is naturally linked with the problem of smooth approximation in $W^{1,2}(\B;\S^2)$.
In fact, it is known \cite{bethuel1990characterization} that \eqref{eq:1_pullback_u_closed} is equivalent to the existence of $(u_k)_{k\in \N}\subset C^\infty(\B;\S^2)$ such that \[u_k\to u \ \text{strongly in } W^{1,2}(\B;\S^2)\,,\]
and our proof of Theorem~\ref{main_thm:lifting} is about choosing Hopf lifts $\hat u_k$ of $u_k$ with good convergence properties.
Here, it is natural to look for smooth approximations of the pair $(u,\eta)$ of \eqref{eq:1_pullback_u_closed}
preserving in addition the nonlinear constraint $d\eta=u^*\omega_{\S^2}$.
The existence of a Hopf lift a posteriori enables us to achieve this,
thus
positively answering  \cite[Open Problem 7]{riviere2026gauge}.

\begin{theorem}\label{main_thm:approximation}
Under the assumptions of Theorem~\ref{main_thm:lifting},
there exist $(u_k)_{k\in \N}\subset C^\infty(\B;\S^2)$ and $(\eta_k)_{k\in \N}\subset \Omega^1(\B)$ such that
\begin{align}\label{eq:eta_strong_approximation} 
\begin{split}
\mathrm{(i)}&\quad
d\eta_k=u_k^\ast\omega_{\S^2}\,,
\\
\mathrm{(ii)}&\quad
(u_k,\eta_k)\to (u,\eta) \ \text{strongly in } W^{1,2}(\B;\S^2)\times L^{2}\big(\B;\bigwedge\nolimits^1(\R^3)^*\big)\,.
\end{split}
\end{align}
\end{theorem}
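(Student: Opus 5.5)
Using Theorem~\ref{main_thm:lifting}, fix a Hopf lift $\hat u\in W^{1,2}(\B;\S^3)$ with $u=h\circ\hat u$. The plan is to approximate $\hat u$ by smooth $\S^3$-valued maps $\hat u_k$ and set $u_k:=h\circ\hat u_k$ and $\tilde\eta_k:=2\hat u_k^*\theta$. Since $\hat u_k$ is smooth, the chain rule gives $d\tilde\eta_k=2\hat u_k^*d\theta=\hat u_k^*h^*\omega_{\S^2}=u_k^*\omega_{\S^2}$ exactly, which is (i). The smooth approximation comes from Schoen--Uhlenbeck/Bethuel: $W^{1,2}(\B;\S^3)$ maps on a $3$-dimensional domain can be approximated strongly in $W^{1,2}$ by smooth maps, because $\pi_1(\S^3)=\pi_2(\S^3)=0$ (indeed $C^\infty$ is dense in $W^{1,p}(\B;\S^3)$ for $p<4$). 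We may take $\hat u_k$ smooth up to the boundary, for instance by first dilating slightly, so that everything lies in $C^\infty(\overline{\B})$. Then $u_k=h\circ\hat u_k\to h\circ\hat u=u$ in $W^{1,2}$, since $h$ is smooth with bounded derivatives on $\S^3$; this is a standard composition continuity argument using dominated convergence along a subsequence.

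Next comes convergence of the $1$-forms. Write $(\hat u_k^*\theta)_i=\theta_a(\hat u_k)\partial_i\hat u_k^a$, where $\theta$ is extended smoothly and boundedly to a neighborhood of $\S^3$ in $\R^4$. Because $\theta_a$ is bounded and continuous, $\theta_a(\hat u_k)\to\theta_a(\hat u)$ a.e. along a subsequence. Combined with $\nabla\hat u_k\to\nabla\hat u$ in $L^2$, dominated convergence (the generalized form, with domination $C|\nabla\hat u_k|$ converging in $L^2$) gives $\tilde\eta_k\to\eta_0:=2\hat u^*\theta$ in $L^2$. Note that $\eta_0$ need not equal the given $\eta$.

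The correction step handles the mismatch. The introduction shows that $d\eta_0=u^*\omega_{\S^2}=d\eta$, so $\beta:=\eta-\eta_0\in L^2$ is weakly closed on the ball. The $L^2$ Poincaré lemma on the star-shaped ball gives $\beta=df$ with $f\in W^{1,2}(\B)$, for example via the $L^p$-bounded homotopy operator (Iwaniec--Lutoborski), or by mollifying after dilation. Choose $f_k\in C^\infty(\overline\B)$ with $f_k\to f$ in $W^{1,2}$ and set $\eta_k:=\tilde\eta_k+df_k$. Then $\eta_k$ is smooth, $d\eta_k=d\tilde\eta_k=u_k^*\omega_{\S^2}$, and $\eta_k\to\eta_0+df=\eta$ in $L^2$, which is (ii). Passing to a subsequence where needed finishes the argument.

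The main difficulty is the input of Theorem~\ref{main_thm:lifting} itself. Beyond that, the only nontrivial step is the density of smooth maps in $W^{1,2}(\B;\S^3)$, and this is classical because $\S^3$ is $2$-connected. The remaining verifications are routine: the weak identity $d(2\hat u^*\theta)=u^*\omega_{\S^2}$ (already noted in the introduction) and the $L^2$ Poincaré lemma on the ball.
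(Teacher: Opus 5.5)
Your proposal is correct and follows essentially the same route as the paper: take the Hopf lift $\hat u$ from Theorem~\ref{main_thm:lifting}, approximate it by smooth $\S^3$-valued maps (Bethuel--Zheng), project with $h$, use $2\hat u_k^*\theta$ as the exact gauge (converging to $2\hat u^*\theta$ in $L^2$ by the same product/dominated-convergence argument), and add a smooth closed correction converging to $\eta-2\hat u^*\theta$. The only cosmetic difference is in that last step: the paper invokes a ``standard mollification argument'' for closed forms, while you write $\eta-2\hat u^*\theta=df$ with $f\in W^{1,2}(\B)$ via the $L^2$ Poincar\'e lemma and approximate $f$, which is an equally valid and slightly more explicit way to do the same thing.
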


\section{Preliminaries}\label{sec:preliminaries}
\subsection{Some basic facts involving differential forms}\label{subsec:diff_forms}
 We adopt an \textit{extrinsic viewpoint}, namely we consider $$u:=(u^1,u^2,u^3):\B\to \mb S^2 \subset \R^3\,,$$ 
where $\R^3$ is endowed with the standard Euclidean basis $(e_1,e_2,e_3)$, so that $du(x)\in \R^3\otimes \R^3$ is given in coordinates as
\[du(x)=\sum_{k=1}^3du^k(x)\otimes e_k
{=\begin{pmatrix}du^1\\[2pt]
du^2\\[2pt]
du^3
\end{pmatrix}
}
\,.\] 

The volume form on $\mb S^n\subset\R^{n+1}$ is given by 
\begin{equation}\label{eq:omega_coordinate_expression}
\omega_{\mb S^n} = \sum_{j=1}^{n+1} (-1)^{j-1} x^j dx^1\wedge \dots \wedge \widehat{dx^j} \wedge \dots \wedge dx^{n+1}\,,
\end{equation}
where\, $\widehat{\cdot}$\, denotes that the corresponding term is omitted.
Thus, for $n=2$, its pull-back by $u$ 
is 
\begin{align}\label{eq:omega_coordinates}
\begin{split}	
u^*\omega_{\S^2}&=u^1du^2\wedge du^3+u^2du^3\wedge du^1+u^3du^1\wedge du^2\\
&=\frac{1}{2}u\cdot du\wedge du=\sum_{1\leq j<\ell\leq 3}u\cdot(\partial_j u\times \partial_\ell u)\,dx^j\wedge dx^\ell\,,	
\end{split}
\end{align}
where $\cdot$ and $\times$ denote the Euclidean inner and outer products  respectively. Since $|u|= 1$, this implies that 
\begin{align}\label{eq:modulus_ofu_omega}
|u^*\omega_{\S^2}|^2=\sum_{1\leq j<\ell \leq 3}|\partial_j u\times \partial_\ell u|^2\,.
\end{align}
We infer in particular the inequality
\begin{align}\label{eq:am_gm-ineq}
|u^*\omega_{\S^2}|\leq \frac 12 |du|^2\quad \mathcal{L}^3\text{-a.e. on }\B\,,
\end{align}
valid for any $u\in W^{1,2}(\B;\S^2)$.
We also note from the expression in the second line of \eqref{eq:omega_coordinates}, 
that $u^\ast\omega_{\S^2}$ can be identified,
via Hodge duality, with the vector field
\begin{equation}\label{eq:Hodge_dual_of_u_pullback_omega}
D(u):=\big(u\cdot(\partial_2u\times\partial_3u),u\cdot(\partial_3u\times\partial_1u), u\cdot(\partial_1u\times\partial_2u)\big)\,,
\end{equation}
so that, in the sense of distributions
\begin{equation}\label{eq:dpullback_divergence}
d(u^*\omega_{\S^2})=\dv (D(u))\,\mathrm{d}x\,,
\end{equation}
\textit{i.e.}, (distributional) closedness of the 2-form $u^\ast\omega_{\S^2}$ is equivalent to the vector field $D(u)$ being (distributionally) divergence-free.

We will also rely on the following form of the Hodge decomposition \cite[Corollary 5.6]{iwaniec1999nonlinear}:

\begin{proposition}\label{prop:particular_eta}
Let $1<p<\infty$. For any $\eta\in L^p\big(\mb B^3, \bigwedge^1(\R^3)^*\big)$, there exists a unique representative $\eta' \in L^p\big(\mb B^3,\bigwedge^1 (\R^3)^*\big)$ such that 
\begin{equation}\label{eq:choice_of_eta_'}
d \eta'=d \eta,\qquad d^* \eta'=0, \qquad \eta'_N=0
\end{equation}
in the weak sense. Precisely, the last two identities in \eqref{eq:choice_of_eta_'} are equivalent to
\begin{equation}
\label{eq:trace_def}
\int_{\mb B^3} \langle \eta', d\psi\rangle = 0 \quad \text{for all } \psi \in C^\infty(\mb B^3).
\end{equation}
\end{proposition}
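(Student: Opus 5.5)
The plan is to write $\eta'=\eta-d\varphi$, where $\varphi\in W^{1,p}(\mb B^3)$ solves the weak Neumann problem
\[
\int_{\mb B^3}\langle d\varphi,d\psi\rangle=\int_{\mb B^3}\langle \eta,d\psi\rangle\qquad\text{for all }\psi\in C^\infty(\mb B^3).
\]
Since $dd\varphi=0$ in the sense of distributions, this choice gives $d\eta'=d\eta$ at once. The displayed identity is exactly \eqref{eq:trace_def} for $\eta'$, which also justifies the stated equivalence. Testing with $\psi\in C^\infty_c$ gives $d^*\eta'=0$ in $\mathcal D'$. Testing with general $\psi\in C^\infty(\overline{\mb B^3})$ then encodes the vanishing normal component, by the Gauss--Green formula for smooth $\eta'$ and by definition of the weak normal trace in general. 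So the proof reduces to existence and uniqueness for this Neumann problem in $L^p$.

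\emph{Existence.} For $p=2$, Lax--Milgram on $W^{1,2}(\mb B^3)/\R$ applies directly. The right-hand side is a bounded functional vanishing on constants, and the form is coercive by the Poincaré--Wirtinger inequality. For general $1<p<\infty$ I would use the $L^p$ theory of the Neumann Laplacian with data in divergence form on the smooth bounded domain $\mb B^3$ (Simader's theory, or the Calderón--Zygmund/Agmon--Douglis--Nirenberg estimates). Concretely:
\begin{enumerate}
\item For smooth $\eta$, the Neumann problem has a smooth solution $\varphi$, unique up to constants.
\item This solution satisfies $\|d\varphi\|_{L^p}\le C\|\eta\|_{L^p}$.
\item Density of smooth forms in $L^p$ and linearity then give a solution for every $\eta\in L^p$.
\end{enumerate}
The estimate in step (ii) is proved by flattening the boundary and reflecting, which reduces it to singular integral bounds. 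A cleaner route is duality: first prove the a priori estimate $\|d\varphi\|_{L^p}\le C\sup\{\int\langle d\varphi,d\psi\rangle:\|d\psi\|_{L^{p'}}\le1\}$ for all $1<p<\infty$ simultaneously, then deduce existence for $p$ from uniqueness for $p'$.

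\emph{Uniqueness.} Suppose $\eta_1',\eta_2'$ both satisfy \eqref{eq:choice_of_eta_'}, and set $\zeta=\eta_1'-\eta_2'\in L^p$. Then $d\zeta=0$ and $\int\langle\zeta,d\psi\rangle=0$ for all $\psi$. Since $\mb B^3$ is star-shaped, the $L^p$ Poincaré lemma (via the Iwaniec--Lutoborski homotopy operator) gives $\zeta=d\varphi$ with $\varphi\in W^{1,p}$. Hence $\varphi$ is a weak Neumann-harmonic function in $W^{1,p}$. Take any $\chi\in C^\infty$ forms and solve the dual Neumann problem $\int\langle d\psi,d\phi\rangle=\int\langle\chi,d\phi\rangle$ in $W^{1,p'}$, which is possible by the existence step. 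Using density of $C^\infty(\overline{\mb B^3})$ in $W^{1,p'}$, this yields $\int\langle d\varphi,\chi\rangle=\int\langle d\varphi,d\psi\rangle=0$. Therefore $d\varphi=0$ and $\zeta=0$.

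The main obstacle is the $L^p$ estimate for the Neumann problem when $p\neq2$ (step (ii) above). Everything else is formal. For that estimate I would invoke the standard elliptic boundary $L^p$ theory on the ball, which is what \cite{iwaniec1999nonlinear} packages.
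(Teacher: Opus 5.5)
Your proof is correct. It differs from the paper mainly in that the paper gives no argument of its own: it simply quotes \cite[Corollary 5.6]{iwaniec1999nonlinear}, the general $L^p$ Hodge theory for differential forms of all degrees on manifolds with boundary. In that framework $\eta'$ is what remains of $\eta$ once its exact component is removed, and uniqueness is the triviality on the ball of $L^p$ harmonic fields with vanishing normal part.

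You unpack this in degree one. The exact component is $d\varphi$, with $\varphi$ the weak $L^p$ Neumann solution. Existence then needs only scalar $W^{1,p}$ estimates for the Neumann problem with divergence-form data (Simader--Sohr type); these hold on any smooth bounded domain, regardless of topology. Uniqueness reduces, via the $L^p$ Poincar\'e lemma on the star-shaped ball and duality with the exponent $p'$, to $H^1(\mathbb B^3)=0$.

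What your route buys is that it shows exactly where the topology of the domain enters, and it uses only scalar elliptic theory; the $L^p$ Neumann estimate is the one ingredient you still outsource. What the citation buys is generality, and it spares that estimate altogether.

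Your uniqueness statement holds for each $1<p<\infty$ separately. This is precisely what the paper needs at $p=3/2$ to identify the weak limit $\tilde\eta$ with the $L^2$ representative of $\eta$.

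One bookkeeping point. In the duality step you need density of $C^\infty(\overline{\mathbb B^3})$ in $W^{1,p'}$, to test the equation of $\varphi$ with $\psi$. You also need density in $W^{1,p}$, to test the dual problem with $\varphi$. Both hold on the ball, so nothing breaks.
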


\subsection{The Hopf map}\label{subsec:hopf}
%
Denoting by $\pi$ the inverse stereographic projection
\begin{equation*}
\pi\colon \C\cup\{\infty\} \to \S^2\,, \quad  \pi(z):=\left(\frac{2z}{|z|^2+1},\frac{|z|^2-1}{|z|^2+1}\right)\,,
\end{equation*}
the Hopf map $h:\S^3\subset \C^2\to \S^2$ is given by
\begin{equation}\label{def:Hopf}
h(z,w):=\pi(z/w)=(2z\bar w, |z|^2-|w|^2)\,,
\end{equation}
where the last equality holds true because $|z|^2+|w|^2=1$ for $(z,w)\in \S^3$.  
The 1-form on $\S^3\subset\R^4$ given by
\begin{equation}\label{eq:alpha_exp}
\theta:=-x^2dx^1+x^1dx^2-x^4dx^3+x^3dx^4\,,
\end{equation}
satisfies
\begin{equation}\label{eq:alpha_hopf}
h^*\omega_{\S^2}=2d\theta=4(dx^1\wedge dx^2+dx^3\wedge dx^4)\,.
\end{equation}

It is  convenient to choose particular coordinates on $\S^3$ and $\S^2$, namely
\begin{align}\label{eq:local_param}
\begin{split}
&\quad [0, \pi/2]\times \S^1\times \S^1\ni (t,e^{i\varphi_1}, e^{i\varphi_2})\mapsto (e^{i\varphi_1}\sin t, e^{i\varphi_2}\cos t) \in \S^3\,,\\
&\quad [0, \pi/2]\times \S^1\ni (t, e^{i\varphi})\mapsto (e^{i\varphi} \sin 2t,-\cos 2t) \in \S^2\,.
\end{split}
\end{align}
In these coordinates, we have
\begin{align*}
g_{\S^3}&=(dt)^2 + \sin^2t\,(d\varphi_1)^2+\cos^2t\,(d\varphi_2)^2\,,\\
g_{\S^2}&= 4(dt)^2+(\sin^22t)\,(d\varphi)^2\,,
\end{align*}
and  the Hopf map takes the simple form
\begin{align*}
h\colon (t,e^{i\varphi_1}, e^{i\varphi_2})\mapsto (t, e^{i(\varphi_1-\varphi_2)})\,.
\end{align*}

One can choose an orthonormal frame $\{\tau_1, \tau_2, \tau_3\}$, defined at a chart point $(t, e^{i\varphi_1}, e^{i\varphi_2}) \in \S^3$ as
\begin{equation}\label{eq:special_S3_frame}
\tau_1:=\frac{\partial}{\partial \varphi_1}+\frac{\partial}{\partial \varphi_2}\,,\quad \tau_2:=\frac{\partial}{\partial t}\,,\quad \tau_3:=\cot t\frac{\partial}{\partial \varphi_1}-\tan t\frac{\partial}{\partial \varphi_2}\,,
\end{equation}
so that $\tau_1$ is the \textit{fundamental vertical vector ﬁeld}, \textit{i.e.}, the generating vector ﬁeld of the $\S^1$-action $\S^1 \times \S^3\ni (e^{it}, (u,v))\to (e^{it}u,e^{it}v)\in \S^3$. With this choice, 
\begin{equation*}
dh(\tau_1)=0\,,\quad dh(\tau_2)=\frac{\partial}{\partial t}\,,\quad dh(\tau_3)=\frac{2}{\sin 2t}\frac{\partial}{\partial \varphi}\,,
\end{equation*}
where $\varphi:=\varphi_1-\varphi_2$. 
The pair $\{f_1,f_2\}:=\big\{\frac{1}{2}\frac{\partial}{\partial t},\frac{1}{\sin 2t}\frac{\partial}{\partial \varphi}\big\}$ is an orthonormal frame on $\S^2$, and in these coordinates the differential of the Hopf map $dh\colon T\S^3\to T\S^2$ has matrix representation 
\begin{equation}\label{eq:matrix_dh}
dh=\begin{pmatrix}
0&2&0\\
0&0&2
\end{pmatrix}\,.
\end{equation}

\subsection{Known approximation and lifting results}\label{subsec:Known_approx_lifting}

\begin{theorem}\label{thm:bethuel}
A map $u\in W^{1,2}(\B;\mb S^2)$ is in the strong $W^{1,2}$-closure of $C^\infty(\B;\mb S^2)$ if and only if $u^*\omega_{\mb S^2}$ is distributionally closed. Moreover, any map in $W^{1,2}(\B;\mb S^3)$ is in the strong $W^{1,2}$-closure of $C^\infty(\B;\mb S^3)$.
\end{theorem}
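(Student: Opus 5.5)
For the necessity direction in the first statement I would argue directly. If $u_k\in C^\infty(\B;\S^2)$ and $u_k\to u$ in $W^{1,2}$, then $d(u_k^*\omega_{\S^2})=u_k^*(d\omega_{\S^2})=0$, since $d\omega_{\S^2}$ is a $3$-form on a $2$-manifold. By \eqref{eq:omega_coordinates}, $u_k^*\omega_{\S^2}$ is a sum of terms of the form (bounded factor)$\,\times\,$(quadratic expression in $du_k$). Strong $W^{1,2}$ convergence, together with a.e.\ convergence of a subsequence and $|u_k|=1$, therefore gives $u_k^*\omega_{\S^2}\to u^*\omega_{\S^2}$ in $L^1$ by dominated convergence. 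Closedness passes to the limit in $\mathcal D'$.

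For sufficiency I would follow Bethuel's strategy in three steps.
\begin{enumerate}
\item Use the density in $W^{1,2}(\B;\S^2)$ of the class $\mathcal R$ of maps that are smooth outside finitely many points $a_i$ and satisfy $|\nabla v|\le C/|x-a_i|$ near them. This is obtained by a cubical decomposition, a projection argument on the $2$-skeleton, and homogeneous extension into the cubes, which has finite energy since $p=2<3$. For $v\in\mathcal R$ one has, via \eqref{eq:dpullback_divergence}, $\dv D(v)=4\pi\sum_i d_i\,\delta_{a_i}$, where $d_i\in\mathbb Z$ is the degree of $v$ on small spheres around $a_i$.
\item Take $v_j\in\mathcal R$ with $v_j\to u$ and set
\[
L(v_j):=\sup\Big\{\int_{\B}D(v_j)\cdot\nabla\zeta\ :\ \|\nabla\zeta\|_\infty\le1\Big\}.
\]
Since $\dv D(u)=0$ by hypothesis (with the boundary handled, e.g., by a slight dilation so that singularities stay interior and the boundary may serve as a sink), the same $L^1$ convergence as above gives $L(v_j)\le\|D(v_j)-D(u)\|_{L^1}\to0$. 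By the Brezis--Coron--Lieb identity, $L(v_j)$ is $4\pi$ times the length of a minimal connection pairing the positive and negative singularities, possibly to the boundary.
\item Remove the singularities in pairs by inserting dipoles along the connecting segments. For each segment of length $\ell$ one modifies $v_j$ in a thin tube around it, at an energy cost of at most $C\ell+o(1)$ (Bethuel's dipole removal construction). The resulting maps are smooth up to a final local smoothing, and they are $W^{1,2}$-close to $v_j$ because the total added energy is $\lesssim L(v_j)+o(1)\to0$ and the tubes have vanishing measure.
\end{enumerate}

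I expect step (iii) to be the main obstacle. The energy of the dipole insertion must be controlled uniformly by the length of the connection, and one must check that the modification does not destroy convergence of the gradients. I would first try the explicit construction in cylindrical coordinates around each segment: compose with a degree-$\pm d$ map that pushes a thin cone onto a single point, so that the cost is linear in the length.

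For the $\S^3$ statement, the same class $\mathcal R$, now with $\S^3$ target, is dense. Since $\pi_2(\S^3)=0$, each point singularity has degree zero on small spheres. The map can thus be homotoped, near each singular point on a small ball, to a smooth map at arbitrarily small energy cost: extend the null-homotopy of the boundary trace, rescaled into a ball of radius $r\to0$. This yields smooth approximants with no topological obstruction and no need for step (ii).
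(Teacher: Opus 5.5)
Your outline is sound, and in substance it is the argument the paper cites rather than reproduces. The paper's proof only observes, via \eqref{eq:dpullback_divergence}, that closedness of $u^*\omega_{\S^2}$ is exactly Bethuel's condition $\dv D(u)=0$, and then quotes Bethuel's characterization theorem and, for $\S^3$, Bethuel--Zheng. Your steps (i)--(iii) (density of the point-singular class, $L(v_j)\to0$, dipole insertion along a minimal connection) are Bethuel's own proof. Since the decisive step (iii) is delegated to his construction, your sufficiency direction rests on the same source. Your necessity argument is the same computation the paper later uses for \eqref{eq:conv_pullback}.

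Two details need correcting.

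In (ii), the supremum defining $L$ must be restricted to $\zeta$ vanishing on the boundary of the (dilated) ball. For unrestricted $\zeta$, $\int D(u)\cdot\nabla\zeta$ picks up the boundary flux $\int_{\partial}\zeta\,D(u)\cdot\nu$, which need not vanish even for smooth $u$. So $L(v_j)\le\|D(v_j)-D(u)\|_{L^1}$ is false as written. With $\zeta=0$ on the boundary the bound holds, and Brezis--Coron--Lieb duality then gives exactly the minimal connection with the boundary as a sink, which is what your parenthetical intends.

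In the $\S^3$ part, filling a degree-zero point singularity at cost $O(r)$ requires the extension energy of the rescaled traces $\sigma\mapsto v(a_i+r\sigma)$ to be bounded uniformly in $r$. This does hold. The bound $|\nabla v|\le C/|x-a_i|$ makes these traces equi-Lipschitz, so each misses a ball in $\S^3$ of radius $\varepsilon(C)>0$. The cone extension through stereographic projection from that ball's centre then has energy bounded in terms of $C$ alone. If $v$ is homogeneous near $a_i$, this is immediate.
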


\begin{proof} The first assertion is a direct reformulation of \cite[Theorem~1]{bethuel1990characterization}, via the identification of the 2-form $u^\ast\omega_{\S^2}$ with the vector-field $D(u)$ of \eqref{eq:Hodge_dual_of_u_pullback_omega} and the subsequent identity \eqref{eq:dpullback_divergence}. The second assertion follows directly from \cite[Theorem 1]{bethuel1988density}.
\end{proof}

Thanks to the fact that any $\mathbb S^1$-bundle over $\B$ is trivial, 
every map $u\in C^\infty(\B;\S^2)$ admits a Hopf lift $\hat u\in C^\infty(\B;\S^3)$ such that $u=h\circ\hat u$.
Moreover, all possible lifts are classified according to their gauge  $\eta=2 \hat u^*\theta$, where $\theta$ is the $1$-form defined in \eqref{eq:alpha_exp}.

\begin{lemma}[{\cite[Lemma~2.1]{Hardt2003}}]
\label{l:smoothlift}
For any $u\in C^\infty(\B;\S^2)$ 
and  $\eta\in \Omega^1(\B)$ with $u^*\omega_{\S^2}=d\eta$, 
there exists a unique (modulo $\S^1$) Hopf lift $\hat u\in C^\infty(\B;\S^3)$ such that 
\begin{align}\label{eq:hatu_alpha}
u=h\circ\hat u
\quad
\text{and}
\quad
\eta= 2\hat u^*\theta\,.
\end{align}
\end{lemma}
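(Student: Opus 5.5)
The plan is to first produce \emph{some} smooth Hopf lift of $u$, and then correct it by a fiberwise rotation (a gauge transformation) so that its gauge matches the prescribed $\eta$. Uniqueness will then follow from the same gauge computation.

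\emph{Step 1: an initial lift.} Pull back the Hopf bundle $h\colon\S^3\to\S^2$ along $u$. This gives a smooth principal $\S^1$-bundle over $\B$, which is trivial because $\B$ is contractible, so it admits a smooth global section. Equivalently, there is $v\in C^\infty(\B;\S^3)$ with $h\circ v=u$. One could also construct $v$ directly, for instance by lifting along radial segments via horizontal transport for the connection $\theta$ and checking smoothness; I would simply invoke triviality of bundles over contractible bases.

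\emph{Step 2: the gauge transformation formula.} Identify $\S^3\subset\C^2$, so that in complex notation $\theta=\operatorname{Im}(\bar z\,dz+\bar w\,dw)$ by \eqref{eq:alpha_exp}. For a smooth real function $\varphi$ on $\B$ set $\hat u:=e^{i\varphi}v$. Then $h\circ\hat u=h\circ v=u$, since $h(e^{i\varphi}z,e^{i\varphi}w)=h(z,w)$ by \eqref{def:Hopf}. A direct computation gives
\[
\hat u^*\theta=\operatorname{Im}\big(\overline{e^{i\varphi}v}\cdot d(e^{i\varphi}v)\big)=v^*\theta+|v|^2\,d\varphi=v^*\theta+d\varphi .
\]
This is the only real computation, and it is routine.

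\emph{Step 3: existence.} By the chain rule and \eqref{eq:alpha_hopf},
\[
d(2v^*\theta)=2v^*d\theta=v^*h^*\omega_{\S^2}=u^*\omega_{\S^2}=d\eta .
\]
Hence $\eta-2v^*\theta$ is a smooth closed $1$-form on $\B$. By the Poincaré lemma it equals $d\psi$ for some $\psi\in C^\infty(\B)$. Choosing $\varphi=\psi/2$ in Step 2 yields $2\hat u^*\theta=2v^*\theta+d\psi=\eta$, which is \eqref{eq:hatu_alpha}.

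\emph{Step 4: uniqueness modulo $\S^1$.} Suppose $\hat u_1,\hat u_2$ both satisfy \eqref{eq:hatu_alpha}. Since the fibers of $h$ are exactly the $\S^1$-orbits, $\lambda:=\langle \hat u_2,\hat u_1\rangle_{\C^2}$, the Hermitian product of $\hat u_2$ and $\hat u_1$, is a smooth $\S^1$-valued map with $\hat u_2=\lambda\hat u_1$. Because $\B$ is simply connected, we can write $\lambda=e^{i\varphi}$ with $\varphi$ smooth. Step 2 then gives $2\,d\varphi=2\hat u_2^*\theta-2\hat u_1^*\theta=\eta-\eta=0$. So $\varphi$ is constant, and $\hat u_2$ is a constant $\S^1$-rotation of $\hat u_1$.

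The main obstacle, mild as it is, is Step 1: producing a smooth global section. Everything after it reduces to the gauge formula of Step 2 and the Poincaré lemma on the ball.
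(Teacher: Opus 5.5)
Your proof is correct and is precisely the argument the paper has in mind. The paper gives no proof of its own: it cites Hardt--Rivi\`ere, after noting that $\S^1$-bundles over $\B$ are trivial and that lifts are classified by their gauge $2\hat u^*\theta$. In your version, the gauge formula $(e^{i\varphi}v)^*\theta=v^*\theta+d\varphi$ together with the Poincar\'e lemma gives existence, and the same formula gives uniqueness modulo $\S^1$.

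One caveat comes from the paper, not from you. Take the identification $z=x^1+ix^2$, $w=x^3+ix^4$ used in your Step~2, together with the stated formulas for $h$ and $\omega_{\S^2}$. Then one actually gets $h^*\omega_{\S^2}=-2\,d\theta$: at $(z,w)=(0,1)$, $h^*\omega_{\S^2}(\partial_{x^1},\partial_{x^2})=-4$, while $2\,d\theta(\partial_{x^1},\partial_{x^2})=4$. So \eqref{eq:alpha_hopf}, your Step~3 and the lemma itself hold only after replacing $\theta$ by $-\theta$. This is a harmless sign convention and does not affect the structure of your argument.
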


Moreover, the chain rule relates the differentials $du$ and $d\hat u$ to
the gauge $\eta$. We note here that this relationship  is valid as soon
as $u$ and $\hat u$ are 
weakly differentiable and record in particular the following formula.

\begin{lemma}\label{lem:lift}	
For any $u\in W^{1,1}(\B;\S^2)$ and $\hat u\in W^{1,1}(\B;\S^3)$ such that 
$u=h\circ\hat u$, 
we have
\begin{equation}\label{eq:lift}
|d \hat u|^2 =\frac 14   |\eta|^2 + \frac 14 |du|^2\quad \mathcal{L}^3
\text{-a.e. on }\B\,,
\end{equation}
where $\eta\in L^1(\B;\bigwedge^1 (\R^3)^*)$ is given by $\eta=2\hat u^*\theta$,
for $\theta\in \Omega^1(\S^3)$  defined in \eqref{eq:alpha_exp}.
\end{lemma}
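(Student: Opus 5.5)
The statement is Lemma~\ref{lem:lift}: the pointwise identity $|d\hat u|^2=\frac14|\eta|^2+\frac14|du|^2$. The plan is to prove it as a pointwise linear-algebra fact at a.e.\ point $x\in\B$. There the chain rule gives $du(x)=dh(\hat u(x))\circ d\hat u(x)$ and $\eta(x)=2\,\theta_{\hat u(x)}\circ d\hat u(x)$.

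First, I will justify the chain rule for $u=h\circ\hat u$ with $\hat u\in W^{1,1}$. Extend $h$ to the smooth quadratic polynomial map $\R^4\to\R^3$ given by \eqref{def:Hopf}, and extend $\theta$ to the linear-coefficient form \eqref{eq:alpha_exp} on $\R^4$. Since $\hat u$ is bounded, the chain rule for Lipschitz (on bounded sets) $C^1$ maps composed with $W^{1,1}$ maps applies. It gives $du=dh(\hat u)\,d\hat u$ a.e., and $\eta=2\hat u^*\theta\in L^1$ is the pointwise pullback.

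Second, fix a point where $\hat u(x)\in\S^3$ and $d\hat u(x)$ exists. Because $|\hat u|=1$, each $\partial_j\hat u(x)$ lies in $T_{\hat u(x)}\S^3$; this is obtained by differentiating $|\hat u|^2=1$ a.e. Expand $\partial_j\hat u=\sum_{a}c_j^a\tau_a$ in the orthonormal frame \eqref{eq:special_S3_frame}, so that $|d\hat u|^2=\sum_{j,a}(c_j^a)^2$.

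Third, compute the two pieces. By \eqref{eq:matrix_dh}, $dh(\partial_j\hat u)=2c_j^2f_1+2c_j^3f_2$ with $\{f_1,f_2\}$ orthonormal. Hence $|du|^2=4\sum_j\big((c_j^2)^2+(c_j^3)^2\big)$, since the Euclidean norm of the tangent vector $du$ equals its $g_{\S^2}$ norm.

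For the vertical part, observe that $\theta$ in \eqref{eq:alpha_exp} is exactly $\langle \cdot, V\rangle$, where $V(x)=(-x^2,x^1,-x^4,x^3)=i\cdot(z,w)$ is the generator of the $\S^1$-action, i.e.\ $V=\tau_1$, a unit vector. By orthonormality of the frame, $\theta(\tau_1)=1$ and $\theta(\tau_2)=\theta(\tau_3)=0$. Hence $\eta_j=2c_j^1$ and $|\eta|^2=4\sum_j(c_j^1)^2$. Summing the two pieces gives $\frac14|\eta|^2+\frac14|du|^2=\sum_{j,a}(c_j^a)^2=|d\hat u|^2$.

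The main obstacle is the coordinate degeneracy: the frame \eqref{eq:special_S3_frame} is singular at $t=0$ and $t=\pi/2$. I would avoid this by working frame-free. The identity to prove reads $|v|^2=\theta(v)^2+\frac14|dh(v)|^2$ for $v\in T_p\S^3$. This says that $dh_p$ restricted to $V(p)^\perp\cap T_p\S^3$ is $2$ times an isometry onto $T_{h(p)}\S^2$, and that $dh_p(V)=0$. Both facts hold at generic $p$ by the frame computation. The exceptional set $\{zw=0\}$ is closed with empty interior, and the identity is polynomial (hence continuous) in $(p,v)$, so it extends to all of $T\S^3$ by continuity. Alternatively, one can use the $SU(2)$-equivariance of $h$, $\theta$ and the metrics, which transports any point to a generic one.
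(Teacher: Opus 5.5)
Your proposal is correct and is essentially the paper's proof: you expand $d\hat u$ in the frame $\{\tau_1,\tau_2,\tau_3\}$, identify $\eta$ with twice the $\tau_1$-component because $\theta=\langle\cdot,\tau_1\rangle$, and read $|du|^2$ off the $\tau_2,\tau_3$-components via \eqref{eq:matrix_dh}. Your extra steps — the $W^{1,1}$ chain rule, tangency of $\partial_j\hat u$, and the continuity (or $SU(2)$-equivariance) argument covering the set $\{zw=0\}$ where the frame degenerates — are correct and fill in details the paper leaves implicit.
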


\begin{proof} 
The coefficients in \eqref{eq:lift} are actually different in \cite{Hardt2003} due to a different choice of normalization for the metric on $\mathbb S^2$, see \cite[Lemma 3.6]{guerra2025global}.
For the readers' convenience we reproduce the proof here.
In the orthonormal frame $\lbrace \tau_1,\tau_2,\tau_3\rbrace$ on $T\S^3$, the $1$-form $\theta$ of \eqref{eq:alpha_exp} corresponds to the scalar product against $\tau_1$, cf. \eqref{eq:special_S3_frame}.
Thus, the condition $\eta =2\hat u^*\theta$  and \eqref{eq:local_param}, \eqref{eq:special_S3_frame}, 
imply 
$\eta=2\tau_1(\hat u)\cdot d\hat u$, hence
\begin{equation}\label{eq:beta_norm}
|\eta|^2=4|\tau_1\cdot d\hat u|^2\,. 
\end{equation}	
Moreover, recalling the expression \eqref{eq:matrix_dh} of $dh$ in this orthonormal frame, the  chain rule gives
$
du=dh\circ d\hat u=2(\tau_2\cdot d\hat u)f_1 + 2(\tau_3\cdot d\hat u)f_2\,,
$
hence
\begin{equation*}
|du|^2=4|\tau_2\cdot d\hat u|^2 + 4 |\tau_3\cdot d\hat u|^2\,,
\end{equation*}
which, together with \eqref{eq:beta_norm},  implies \eqref{eq:lift}.
\end{proof}

\section{Proofs of the main theorems}\label{sec:proofs}

\begin{proof}[Proof of Theorem \ref{main_thm:lifting}]\label{pf:thm_lifting}
The proof follows in a sense the lines of proof of \cite[Theorem 3.3]{guerra2025global}. Using the assumption \eqref{eq:1_pullback_u_closed} and Theorem \ref{thm:bethuel} we can find a sequence of maps $(u_k)_{k\in \N}\subset C^\infty(\B;\mb S^2)$ such that 
\begin{equation}\label{eq:strong_W_1_2_approx}
u_k\rightarrow u \text{ strongly in } W^{1,2}(\B;\S^2) \text{ and } \mathcal{L}^3\text{-a.e., as }k\to\infty\,.
\end{equation} 
Since (by the smoothness of $u_k$), we have
\[du_k^\ast\,\omega_{\S^2}=u_k^\ast(d\omega_{\S^2})=0\,,\]
and $\B$ is simply connected, we can also fix smooth 1-forms 
$(\eta_k)_{k\in \N}\subset\Omega^1(\B)$
such that
\begin{align}\label{eq:dbeta_j}
d\eta_k= u^*_k\,\omega_{\mb S^2}\,, \qquad  d^*\eta_k=0\,, \qquad (\eta_k)_N=0\,,
\end{align} 
cf.\ Proposition \ref{prop:particular_eta}. By the Bourgain--Brezis elliptic estimates, see for instance \cite[Theorem 4.1]{brezis2007boundary}, we have
\begin{equation}\label{eq:elliptic_estimate}
\|\eta_k\|_{L^{3/2}(\B)}\leq C \|u^*_k\, \omega_{\mb S^2}\|_{L^1(\B)}\,,
\end{equation} 
for a universal constant $C>0$. We remark that, nevertheless, the critical exponent $3/2 = 1^*$ on the right-hand side is not needed for the argument below, and hence one could also appeal to more standard elliptic estimates.

Invoking Lemma~\ref{l:smoothlift}, we now fix lifts $(\hat u_k)_{k\in \N}\subset C^\infty(\B;\S^3)$ satisfying
\begin{equation}\label{eq:beta_j}
h\circ \hat u_k=u_k\,\ \text{and }\eta_k=2\hat u_k^*\theta\,.
\end{equation}
In view of \eqref{eq:am_gm-ineq} and \eqref{eq:strong_W_1_2_approx}-\eqref{eq:elliptic_estimate}, by passing to a non-relabeled subsequence, we have 
\begin{equation}\label{eq:weak_L_3_2_conv__forms}
\eta_k\rightharpoonup \tilde \eta \text{ weakly in } L^{3/2}\big(\B;\bigwedge\nolimits^1 (\R^3)^*\big) \text{ as }k\to\infty\,,
\end{equation}
{for some $\tilde \eta\in L^{3/2}\big(\B;\bigwedge\nolimits^1 (\R^3)^*\big)$.
By \eqref{eq:weak_L_3_2_conv__forms}, \eqref{eq:dbeta_j} and \eqref{eq:trace_def} (applied to $\eta_k$), 
we also see that $\tilde \eta_N=0$.}
By \eqref{eq:lift} applied to the triplet $(\hat u_k,\eta_k,u_k)$, together with \eqref{eq:strong_W_1_2_approx} and \eqref{eq:elliptic_estimate}, we have
\[\sup_{k\in\N}\|\hat u_k\|_{W^{1,3/2}(\B)}<+\infty\,,\]
and passing to a further non-relabeled subsequence, we obtain  $\hat u\in W^{1,3/2}(\B;\S^{3})$
so that 
\begin{equation}\label{eq:weak_W_3_2_conv__lifts}
\hat u_k\rightharpoonup \hat u\text{ weakly in } W^{1,3/2}\big(\B;\R^4\big)\text{ and pointwise } \mathcal{L}^3\text{-a.e. in } \B\,, \text{ as }k\to\infty\,.
\end{equation} 
Using now \eqref{eq:weak_L_3_2_conv__forms} and \eqref{eq:weak_W_3_2_conv__lifts}, we can pass to the limit in \eqref{eq:beta_j}, to infer that
\begin{equation}\label{eq:beta_equation}
h\circ \hat u=u\,,\ \ \tilde\eta = 2 \hat u^*\theta\ \ \mathcal{L}^3\text{- a.e.\ in } \B\,.
\end{equation} 
Indeed, the first identity in \eqref{eq:beta_equation} follows from the $\mathcal{L}^3$-a.e.\ convergence of $(\hat u_k,u_k)$ to $(\hat u, u)$, while for the second one therein, we observe the following. In view of \eqref{eq:alpha_exp}, the forms $\hat u_k^*\theta$ can be written in coordinates as second-order polynomials of the form $\hat u_k\bullet d\hat u_k$, where the latter notation indicates that each monomial in the expression is of first order separately in $\hat u_k$ and $d\hat u_k$. In particular, since 
\[
\hat u_k\rightarrow \hat u \ \text{ strongly in } L^3(\B;\R^3)\, \ \text{ and } d\hat u_k\rightharpoonup d\hat u \ \text{weakly in } L^{3/2}(\B; \R^3\otimes \R^3)\,, \text{ as } k\to \infty\,,\]
this product structure of weakly convergent against strongly convergent objects  justifies the second identity in \eqref{eq:beta_equation}, 
by taking the limit of the second identity of \eqref{eq:beta_j}. 

Further, for any $j,\ell\in\lbrace 1,2,3\rbrace$,
we have
\begin{align*}
u_k\cdot (\partial_j u_k\times\partial_\ell u_k)
&
=u_k\cdot (\partial_j u_k\times\partial_\ell u_k - \partial_j u\times\partial_\ell u)
+u_k\cdot (\partial_j u \times \partial_\ell u)\,.
\end{align*}
Since $|u_k|= 1$, by \eqref{eq:strong_W_1_2_approx} 
the first term in the right-hand side above converges to 0 in $L^1(\B)$, 
and the second term converges to 
$u\cdot(\partial_j u\times\partial_\ell u)$ in $L^1(\B)$ by dominated convergence.
Recalling the expression \eqref{eq:omega_coordinates} of $u^*\omega_{\S^2}$, 
we deduce the convergence 
\begin{align}\label{eq:conv_pullback}
u_k^*\omega_{\S^2}\to u^*\omega_{\S^2}
\quad\text{in } 
L^1\big(\B;\bigwedge\nolimits^2 (\R^3)^*\big)\,.
\end{align}
Therefore, taking also \eqref{eq:weak_L_3_2_conv__forms} into account, we may pass to the limit in \eqref{eq:dbeta_j} and deduce
\begin{equation*}\label{eq:dbeta_equation}
d\tilde\eta = u^*\omega_{\S^2}\,, \quad   d^*\tilde \eta=0\,, \quad \tilde \eta_N=0\,,
\end{equation*} 
in the sense of distributions. 
{We now note that, as $d \eta = u^*\omega_{\S^2}$ and $\eta \in L^2$, by Proposition \ref{prop:particular_eta}  we have also $\tilde \eta\in L^2$.	
Hence, by \eqref{eq:beta_equation} and \eqref{eq:lift}, we conclude
 that $\hat u\in W^{1,2}(\B;\S^3)$.}
\end{proof}

\begin{proof}[Proof of Theorem \ref{main_thm:approximation}]\label{pf:thm_approximation}
{By Theorem \ref{main_thm:lifting} there exists $\hat u\in W^{1,2}(\B;\S^3)$ satisfying \eqref{eq:1_pullback_u_closed} and \eqref{eq:Hopf_lift}. 
Applying Theorem \ref{thm:bethuel},
we find  $(\hat u_k)_{k\in \N}\subset C^\infty(\B;\S^3)$ such that }
\begin{equation}\label{eq:Bethuel_sequence}
\hat u_k\rightarrow \hat u \ \text{ strongly in } W^{1,2}(\B;\S^3)
\text{ and } \mathcal{L}^3\text{-a.e. in }\B\,.
\end{equation}
Let us now set 
\begin{equation}\label{eq:Hopf_projections}
u_k:=h\circ \hat u_k\in C^{\infty}(\B;\S^2)\,, \ \quad \zeta_k:=2\hat u_k^\ast \theta\in 
\Omega^1(\B)\,.
\end{equation}
Since $h$ is smooth on $\S^3$, the convergence \eqref{eq:Bethuel_sequence} and the lifting identity \eqref{eq:Hopf_lift} imply
\begin{equation}\label{eq:hat_u_k_good}
u_k=h\circ \hat u_k\rightarrow h\circ \hat u=u\, \ \text{strongly in } W^{1,2}(\B;\S^2)\,.
\end{equation} 
The explicit expression \eqref{eq:alpha_exp} of $\theta$ gives
\begin{equation}\label{eq:eta_k_monomial}
\zeta_k=2\hat u_k^\ast\theta=2\big(-\hat u_k^2d\hat u_k^1+\hat u_k^1d\hat u_k^2-\hat u_k^4d\hat u_k^3+\hat u_k^3d\hat u_k^4\big)=:\hat u_k\bullet d\hat u_k\,,
\end{equation} 
where, as in the proof of Theorem~\ref{main_thm:lifting},
the latter notation indicates that each monomial in the expression \eqref{eq:eta_k_monomial} is of first order separately in $\hat u_k$ and $d\hat u_k$. 
As for the proof of \eqref{eq:conv_pullback}, we write
\begin{align*}
\hat u_k\bullet d\hat u_k &
=\hat u_k \bullet (d\hat u_k -d\hat u) +  \hat u_k\bullet d\hat u\,.
\end{align*}
The fact that $|u_k|=1$ and the convergence \eqref{eq:Bethuel_sequence} 
ensure that the first term in the right-hand side converges to 0 in $L^2$, and the second term converges in $L^2$ to $\hat u\bullet d\hat u$ by dominated convergence.
Hence letting $k\to\infty$ in \eqref{eq:eta_k_monomial} gives
\begin{align}\label{eq:eta_k_ok}
\zeta_k\to 2\hat u^*\theta\quad\text{strongly in }L^2\big(\B;\bigwedge\nolimits^1(\R^3)^*\big)\,.
\end{align}
 Since 
\begin{equation*}
d(\eta-2\hat u^*\theta)=0\,,	
\end{equation*}
cf. again \eqref{eq:hatu_alpha}, \eqref{eq:alpha_hopf}, and where $\eta$ is as in \eqref{eq:1_pullback_u_closed}, by a standard mollification argument we can also find $(\tilde \zeta_k)_{k\in \N}\subset \Omega^1(\B)$ such that 
\begin{align}\label{eq:tilde_zeta_k_ok}
d\tilde \zeta_k=0\,,\quad	\tilde \zeta_k\to \eta-2\hat u^*\theta\quad\text{strongly in }L^2\big(\B;\bigwedge\nolimits^1(\R^3)^*\big)\,.
\end{align}
Finally, setting $\eta_k:=\zeta_k+\tilde\zeta_k$, \eqref{eq:eta_strong_approximation}(i) follows from \eqref{eq:Hopf_projections} and the first property in \eqref{eq:tilde_zeta_k_ok}, while \eqref{eq:eta_strong_approximation}(ii) from  \eqref{eq:eta_k_ok} and \eqref{eq:tilde_zeta_k_ok}.
\end{proof}

\section*{Acknowledgements}

AG acknowledges the support of the Royal Society through a Newton International Fellowship. XL was supported by 
the ANR project ANR-22-CE40-0006.  KZ was funded by the Deutsche Forschungsgemeinschaft (DFG, German Research Foundation) – CRC 1720 – 539309657.

\bibliographystyle{acm}
\bibliography{ref_hopf}

@article{brezis2007boundary,
	author = {Brezis, Ha{\"\i}m and Van Schaftingen, Jean},
	journal = {Calculus of Variations and Partial Differential Equations},
	number = {3},
	pages = {369--388},
	publisher = {Springer},
	title = {Boundary estimates for elliptic systems with L1--data},
	volume = {30},
	year = {2007}}

@article{VanSchaftingen2025,
 author = {van Schaftingen, Jean},
 title = {Lifting of fractional {Sobolev} mappings to noncompact covering spaces},
 fjournal = {Annales de l'Institut Henri Poincar{\'e} C. Analyse Non Lin{\'e}aire},
 journal = {Ann. Inst. Henri Poincar{\'e} C, Anal. Non Lin{\'e}aire},
 issn = {0294-1449},
 volume = {42},
 number = {1},
 pages = {41--84},
 year = {2025},
 language = {English},
 doi = {10.4171/AIHPC/98},
 zbMATH = {7988204},
 Zbl = {1570.46033}
}

@article{Hardt2003,
	author = {Hardt, R. and Rivi{\'e}re, T.},
	fjournal = {Annali della Scuola Normale Superiore di Pisa. Classe di Scienze. Serie V},
	issn = {0391-173X},
	journal = {Ann. Sc. Norm. Super. Pisa, Cl. Sci. (5)},
	language = {English},
	number = {2},
	pages = {287--344},
	title = {Connecting topological {Hopf} singularities},
	url = {https://eudml.org/doc/84503},
	volume = {2},
	year = {2003},
	zbl = {1150.58004},
	zbmath = {5019611}}

@article{iwaniec1999nonlinear,
	author = {Iwaniec, T. and Scott, C. and Stroffolini, B.},
	doi = {10.1007/BF02505905},
	fjournal = {Annali di Matematica Pura ed Applicata. Serie Quarta},
	issn = {0373-3114},
	journal = {Ann. Mat. Pura Appl. (4)},
	language = {English},
	pages = {37--115},
	title = {Nonlinear {Hodge} theory on manifolds with boundary},
	volume = {177},
	year = {1999},
	zbl = {0963.58003},
	zbmath = {1472046}}

@article{riviere2026gauge,
	author = {Rivi{\`e}re, Tristan},
	journal = {arXiv preprint arXiv:2604.06026},
	title = {Gauge Symmetry Breaking in the Asymptotic Analysis of Self Dual Yang-Mills-Higgs $ SU (2) $ Monopoles},
	x-fetchedfrom = {Google Scholar},
	year = {2026}}

@article{bethuel1990characterization,
	author = {Bethuel, F.},
	doi = {10.1016/S0294-1449(16)30292-X},
	fjournal = {Annales de l'Institut Henri Poincar{\'e}. Analyse Non Lin{\'e}aire},
	issn = {0294-1449},
	journal = {Ann. Inst. Henri Poincar{\'e}, Anal. Non Lin{\'e}aire},
	language = {English},
	number = {4},
	pages = {269--286},
	title = {A characterization of maps in {{\(H^ 1(B^ 3,S^ 2)\)}} which can be approximated by smooth maps},
	url = {https://eudml.org/doc/78224},
	volume = {7},
	year = {1990},
	zbl = {0708.58004},
	zbmath = {4163760}}

@article{guerra2025global,
	author = {Guerra, Andr{\'e} and Lamy, Xavier and Zemas, Konstantinos},
	journal = {arXiv preprint arXiv:2507.10686},
	title = {Global minimality of the Hopf map in the Faddeev-Skyrme model with large coupling constant},
	x-fetchedfrom = {Google Scholar},
	year = {2025}}

@article{bethuel1988density,
	author = {Bethuel, Fabrice and Zheng, Xiaomin},
	journal = {Journal of functional analysis},
	number = {1},
	pages = {60--75},
	publisher = {Elsevier},
	title = {Density of smooth functions between two manifolds in Sobolev spaces},
	volume = {80},
	x-fetchedfrom = {Google Scholar},
	year = {1988}}

\end{document}